\documentclass[10pt,a4paper]{article}
\usepackage{amsfonts}
\usepackage[latin9]{inputenc}
\usepackage{amsmath}
\usepackage{amssymb}
\usepackage{url}
\usepackage{graphicx}
\usepackage[pdfstartview=FitH]{hyperref}
\usepackage{amsthm}
\usepackage{authblk}
\makeatletter
\begin{document}
\newtheorem{theorem}{Theorem}[section]
\newtheorem{lemma}[theorem]{Lemma}
\newtheorem{definition}[theorem]{Definition}
\newtheorem{claim}[theorem]{Claim}
\newtheorem{example}[theorem]{Example}
\newtheorem{remark}[theorem]{Remark}
\newtheorem{proposition}[theorem]{Proposition}
\newtheorem{corollary}[theorem]{Corollary}

\title{Property A and the existence of a Markov process with a trivial Poisson boundary}
\author{Izhar Oppenheim}
\affil{Department of Mathematics\\
 The Ohio State University  \\
 Columbus, OH 43210, USA \\
E-mail: izharo@gmail.com}

\maketitle

\begin{abstract}
This note make the observation that property A for a space is equivalent to the existence of a Markov process on the space which has a (uniformly) trivial Poisson boundary. 
\end{abstract}

\section{Introduction} 
\label{intro}

In \cite{Yu}, Yu introduced property A which can be viewed as a non-equivariant analogue of amenability. To explain what "non-equivariant" means, we recall Reiter's necessary and sufficient condition for amenability:
\begin{theorem}
\label{Reiter}
A countable group $G$ is amenable if and only if there exists a sequence of non degenerate probability measures $\lbrace \mu_n \rbrace_{n \in \mathbb{N}}$ on $G$ such that  
$$ \forall g \in G, \lim_{n \rightarrow \infty} \Vert  g \mu_n - \mu_n \Vert_1 = 0.$$
\end{theorem}
One can think of the sequence of measures in the above theorem as a sequence of maps $\phi_n : G \rightarrow Prob(G)$ that is defined as $\phi_n (g, h) = \mu_n (g^{-1} h), \forall g \in G$ (we denote $\phi_n (g) (h) = \phi (g,h)$). Compare this condition to the condition of Higson and Roe \cite{HR} for property A:
\begin{theorem}
\label{HR}
A finitely generated group has property A if and only if there exists a sequence of maps $\phi_n : G \rightarrow Prob(G)$ with the following properties:
\begin{enumerate}
\item For every $n$ there is a finite subset $F \subset G$ such that for every $g \in G$ we have $g^{-1} Supp(\phi_n (g,. )) \subset F$.
\item For every $g \in G$ we have
$$\lim_{n \rightarrow \infty} \sup_{h \in G} \Vert \phi_n (h,.) - \phi_n (hg,.) \Vert_1 = 0.$$

\end{enumerate}
\end{theorem}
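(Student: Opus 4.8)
The plan is to prove Theorem \ref{HR} by passing through the standard ``$\ell^1$-reformulation'' of property A: a finitely generated group $G$ (equipped with a word metric $d$, which I take to be left-invariant, so $d(g,h)=|g^{-1}h|$) has property A if and only if for every $R>0$ and $\epsilon>0$ there exist a finitely supported probability-measure-valued map $\xi\colon G\to\mathrm{Prob}(G)$ and a constant $S>0$ such that $\mathrm{Supp}(\xi_g)\subset B(g,S)$ for all $g$ and $\Vert \xi_g-\xi_h\Vert_1\le \epsilon$ whenever $d(g,h)\le R$. This equivalence is part of the by-now-standard list of characterizations of Yu's property; I would quote it and then argue that the $\phi_n$-formulation of Higman and Roe is merely a repackaging of the quantifiers in this form, once one reads off the dictionary. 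Condition (1) is exactly the statement that $\phi_n(g,\cdot)$ has propagation bounded by some $S_n$: indeed $g^{-1}\mathrm{Supp}(\phi_n(g,\cdot))\subset F$ with $F$ finite is equivalent, by left-invariance, to $\mathrm{Supp}(\phi_n(g,\cdot))\subset B(g,S_n)$ with $S_n=\max_{f\in F}|f|$. Condition (2) controls the $\ell^1$-variation of $\phi_n$ across pairs of points at a fixed distance $|g|$.

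For the forward direction (property A $\Rightarrow$ existence of $\phi_n$) I would diagonalize: apply the $\ell^1$-characterization with $R=n$ and $\epsilon=1/n$ to obtain $\xi^{(n)}$ with propagation $S_n$, and set $\phi_n(h,\cdot):=\xi^{(n)}_h$. Condition (1) then holds with $F=B(e,S_n)$, which is finite by bounded geometry. For condition (2), fix $g\in G$; as soon as $n\ge |g|$ one has $d(h,hg)=|g|\le n$, hence $\Vert \phi_n(h,\cdot)-\phi_n(hg,\cdot)\Vert_1\le 1/n$ for \emph{every} $h$, so the supremum over $h$ tends to $0$.

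For the reverse direction ($\phi_n\Rightarrow$ property A) I would fix $R,\epsilon>0$ and exploit finite generation: the ball $\{g:|g|\le R\}$ is finite, and condition (2) gives $\sup_h\Vert \phi_n(h,\cdot)-\phi_n(hg,\cdot)\Vert_1\to 0$ for each of these finitely many $g$, so their maximum also tends to $0$ and I can choose a single $n$ making it $\le\epsilon$. Given any pair $h,h'$ with $d(h,h')\le R$, write $h'=hg$ where $|g|=d(h,h')\le R$; then $\Vert\phi_n(h,\cdot)-\phi_n(h',\cdot)\Vert_1=\Vert\phi_n(h,\cdot)-\phi_n(hg,\cdot)\Vert_1\le\epsilon$, while condition (1) supplies the propagation bound $S_n$, so $\phi_n$ witnesses property A. (If one prefers to assume (2) only for the generators, the same conclusion follows by telescoping $g=s_1\cdots s_m$ along a geodesic and bounding $\Vert\phi_n(h,\cdot)-\phi_n(hg,\cdot)\Vert_1\le\sum_{j}\sup_{h'}\Vert\phi_n(h',\cdot)-\phi_n(h's_j,\cdot)\Vert_1\le R\max_s\sup_{h'}\Vert\phi_n(h',\cdot)-\phi_n(h's,\cdot)\Vert_1$.)

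Routine verifications aside, the main thing to get right---and the only real obstacle---is the bookkeeping of the two directions of the quantifier swap together with the left/right-invariance conventions, so that ``$g^{-1}\mathrm{Supp}\subset F$'' genuinely corresponds to bounded propagation and ``$\Vert\phi_n(h,\cdot)-\phi_n(hg,\cdot)\Vert_1$'' genuinely measures variation over distance-$|g|$ pairs. The one non-formal input is the equivalence of Yu's original definition with the $\ell^1$-characterization used above; if the paper takes property A to \emph{be} defined by that $\ell^1$-characterization, the theorem reduces entirely to the elementary diagonalization and finite-ball arguments sketched here.
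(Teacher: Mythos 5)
Your proposal is correct and matches the paper's treatment: the paper does not prove Theorem \ref{HR} directly but simply notes it is the specialization of the Higson--Roe metric characterization (Theorem \ref{HR-metric}) to the Cayley graph of a finitely generated group, and your argument is exactly that specialization with the quantifier and left-invariance bookkeeping spelled out. The one input you flag as non-formal (the equivalence of Yu's definition with the $\ell^1$-characterization) is precisely Theorem \ref{HR-metric}, which the paper likewise quotes from \cite{HR} without proof, so nothing is missing.
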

One can think the the two theorems above are not completely analogues, because in the condition for amenability in theorem \ref{Reiter}, $\mu_n$ does not have to be of finite support. This can be amended in two ways: first, without any loss of generality, one can add the assumption of finite support to theorem \ref{Reiter} (this is due to the fact that one can approximate any $\mu_n$ by $\mu_n'$ with finite support). The second way, which we will use, because it is more convenient for the purpose of this article, is to relax the finite support condition of the map $\phi_n$ in theorem \ref{HR}. We shall see below that the finite support condition can be replaced by the condition: \\
\textit{
For every $n$ and for every $\delta >0$ there is a finite set $F_\delta \subset G$ such that for every $g \in G$ we have 
$$ \sum_{h \in F_\delta} \phi_n (g, h) > 1 - \delta.$$
}
\\ \\
In response to a conjuncture by Furstenberg, it was proven in \cite{Rosen} and in \cite{KV} that the sequence of measures in the condition for amenability can be given as a sequence of convolutions of a single measure: 
\begin{theorem}
A countable group $G$ is amenable if and only if there exist a probability measure $\mu$ on $G$ such that 
$$ \forall g \in G, \lim_{n \rightarrow \infty} \Vert  g \mu^{*n} - \mu^{*n} \Vert_1 = 0.$$
\end{theorem}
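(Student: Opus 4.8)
The plan is to prove the two implications separately, with the forward direction (existence of $\mu$ $\Rightarrow$ amenability) being immediate from Reiter's criterion and the reverse direction (amenability $\Rightarrow$ existence of $\mu$) carrying all of the difficulty. For the easy direction, I would suppose such a $\mu$ exists and simply set $\mu_n := \mu^{*n}$; this is a sequence of probability measures with $\lim_n \|g\mu_n - \mu_n\|_1 = 0$ for every $g$, which is exactly Reiter's condition, so $G$ is amenable by Theorem 1.1. The only point needing care is the non-degeneracy clause of Theorem 1.1: if the support of $\mu$ did not generate $G$ as a group, let $H \subsetneq G$ be the subgroup it generates and pick $g \notin H$; then $\mu^{*n}$ is supported in $H$ while $g\mu^{*n}$ is supported in the disjoint coset $gH$, forcing $\|g\mu^{*n}-\mu^{*n}\|_1 = 2$ for all $n$, contradicting the hypothesis. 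Hence the hypothesis itself forces $\mu$ to be non-degenerate, and Theorem 1.1 applies.

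For the reverse direction, the first step I would record is a monotonicity identity. Since left translation by $g$ commutes with right convolution, $g\mu^{*(n+1)} - \mu^{*(n+1)} = (g\mu^{*n} - \mu^{*n}) * \mu$, and because right convolution by a probability measure is an $\ell^1$-contraction, the sequence $a_n(g) := \|g\mu^{*n} - \mu^{*n}\|_1$ is non-increasing in $n$. Consequently $\lim_n a_n(g)$ exists and equals $\inf_n a_n(g)$, so it suffices to drive $a_n(g)$ below any prescribed $\epsilon$ for a single large $n$, for each fixed $g$; the full limit then comes for free. This reduces the problem to producing one measure whose high convolution powers are approximately invariant along some subsequence of times.

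The substance, then, is to manufacture such a single $\mu$ out of the sequence guaranteed by amenability. Starting from a F\o lner/Reiter sequence and truncating, I would fix finitely supported probability measures $\nu_k$ that are $\epsilon_k$-invariant under the first $k$ elements of $G$, with $\epsilon_k \downarrow 0$, and then assemble these into one homogeneous step distribution $\mu$ whose $n$-step laws, for a suitable increasing sequence of times, are dominated by the $\nu_k$. Equivalently, following Kaimanovich--Vershik, one constructs a non-degenerate $\mu$ of sublinear asymptotic entropy --- possible precisely because an amenable $G$ carries a F\o lner sequence over which mass can be spread with sub-linear entropy growth --- whose vanishing entropy yields a trivial Poisson boundary, and triviality of the boundary is in turn equivalent to the stated $\ell^1$ condition.

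The main obstacle is exactly this construction of a single measure: the asymptotic-invariance hypothesis of Reiter's theorem is supplied by a whole \emph{sequence} $\mu_n$, whereas convolution powers of a fixed measure interact badly with forming mixtures, so one cannot simply set $\mu = \sum_k p_k \nu_k$ and expect $\mu^{*n}$ to inherit the invariance of the $\nu_k$. Forcing a homogeneous walk to behave, at prescribed times, like the inhomogeneous F\o lner data --- without the walk being able to ``remember'' which level it currently sits on --- is the crux, and is where the entropy/boundary machinery (or an explicit time-scheduling argument exploiting the monotonicity above) does the real work.
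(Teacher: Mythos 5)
First, a remark on the comparison: the paper does not prove this statement at all --- it is quoted from Rosenblatt and Kaimanovich--Vershik --- so the only in-paper material to measure you against is the closely analogous construction in the proof of Theorem \ref{Poisson-metric}. Your easy direction is correct: setting $\mu_n := \mu^{*n}$ gives Reiter's condition, and your observation that the hypothesis itself forces non-degeneracy (if $\mathrm{Supp}(\mu)$ generated a proper subgroup $H$ and $g \notin H$, then $\mu^{*n}$ and $g\mu^{*n}$ would live on the disjoint sets $H$ and $gH$, forcing $\Vert g\mu^{*n}-\mu^{*n}\Vert_1 = 2$) is a nice and correct point. The monotonicity of $a_n(g) = \Vert g\mu^{*n}-\mu^{*n}\Vert_1$ is also right, since left translation commutes with right convolution and right convolution by a probability is an $\ell^1$-contraction (part (1) of Proposition \ref{convolution}).

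The gap is in the hard direction, and it is essentially the entire theorem. You correctly reduce the problem to manufacturing a single measure whose convolution powers are almost invariant along a subsequence of times, you correctly note that a naive mixture $\mu = \sum_k p_k \nu_k$ does not obviously inherit the invariance of the $\nu_k$, and then you defer the construction to ``entropy/boundary machinery or an explicit time-scheduling argument'' without supplying either; nothing in the proposal actually produces the measure. The missing scheduling argument is exactly what the paper carries out (in the harder, non-equivariant setting) in the proof of Theorem \ref{Poisson-metric}: one does take $\mu = \sum_i t_i \nu_i$ with $\sum_i t_i = 1$, but one chooses times $n_i$ with $(t_1+\cdots+t_{i-1})^{n_i} < \varepsilon_i$ and chooses the finitely supported $\nu_i$ inductively so that $\nu_i$ is $\varepsilon_i$-almost-invariant under left translation by every element of a finite set large enough to contain $g$ times the support of any $n_i$-fold product of the earlier $\nu_k$. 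Expanding $\mu^{*n_i}$ over tuples $(k_1,\ldots,k_{n_i})$, the tuples with all $k_j < i$ carry total mass less than $\varepsilon_i$, and every remaining tuple contains a factor $\nu_{k_j}$ with $k_j \ge i$ that absorbs the left translation by $g$ via part (2) of Proposition \ref{convolution}, yielding $\Vert g\mu^{*n_i}-\mu^{*n_i}\Vert_1 \le 4\varepsilon_i$; your monotonicity remark then upgrades this subsequence estimate to the full limit. Without this inductive coupling of the $\nu_i$ to the weights $t_i$ and the times $n_i$, the argument does not close, so as written the proposal establishes only the easy implication.
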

It is also shown in \cite{Rosen}, \cite{KV} that this condition for amenability is equivalent to the triviality of the Poisson boundary of $(G,\mu)$.  \\
The easy observation made in this note is that the technique used in \cite{KV} needs very little adaptation to the case of property A, namely we show that:
\begin{theorem}
\label{Poisson-group}
A finitely generated group has property A if and only if there exists a transition probability $P$ with the state space $G$ with the following properties:
\begin{enumerate}
\item For every $\delta >0$ there is a finite set $F_\delta \subset G$ such that for every $g \in G$ we have 
$$ \sum_{h \in F_\delta} P  (g,gh) > 1 - \delta.$$
\item For every $g \in G$ we have
$$\lim_{n \rightarrow \infty} \sup_{h \in G} \Vert P^n (h,.) - P^n (hg,.) \Vert_1 = 0.$$
\end{enumerate}  
\end{theorem}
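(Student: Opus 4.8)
The statement splits into two implications, and the plan is to treat them by different means: the direction asserting that such a process yields property A is a direct verification against the Higman--Roe criterion, while the converse is where the technique of \cite{KV} must be imported.

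\textbf{From the process to property A.} Given $P$ satisfying (1) and (2), I would set $\phi_n(g,\cdot) := P^n(g,\cdot)$ and check the (modified) hypotheses of Theorem \ref{HR}. Condition (2) of the present statement is \emph{verbatim} the asymptotic-invariance condition of Theorem \ref{HR}, so nothing is required there. The only work is to promote the one-step tightness in (1) to an $n$-step tightness, i.e.\ to produce, for each fixed $n$ and each $\delta>0$, a finite set $F$ with $\sum_{h\in F}P^n(g,gh)>1-\delta$ uniformly in $g$. Here I would take $F:=F_{\delta/n}\cdots F_{\delta/n}$ (the $n$-fold product set of the set $F_{\delta/n}$ furnished by (1)) and run a one-step conditioning / union-bound argument: writing the $n$-step displacement as a product of $n$ one-step displacements, each of which lies in $F_{\delta/n}$ with conditional probability exceeding $1-\delta/n$ regardless of the current position, the total displacement lies in $F$ with probability at least $1-\delta$. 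This verifies the modified support condition of Theorem \ref{HR} and hence property A.

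\textbf{From property A to the process.} Starting from property A, Theorem \ref{HR} (in its modified, finite-$\delta$-tail form) supplies maps $\phi_n:G\to Prob(G)$, which I regard as transition kernels $\Phi_n(g,h):=\phi_n(g,h)$, each asymptotically invariant in the sense of its condition (2) and each with uniformly tight displacements. The goal is to manufacture a \emph{single} kernel $P$ whose iterates $P^n$ carry these two features. A preliminary observation guides the construction: for any transition kernel $Q$ one has
$$\Vert Q^n(h,\cdot)-Q^n(hg,\cdot)\Vert_1 \le \Vert Q(h,\cdot)-Q(hg,\cdot)\Vert_1,$$
so the iterates of one fixed kernel are \emph{at least as invariant} as the kernel itself, uniformly in $h$. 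Thus picking $Q=\Phi_{n}$ for large $n$ gives powers whose invariance defect is uniformly small, but \emph{bounded} rather than decaying; the heart of the matter is to arrange genuine decay to $0$ as $n\to\infty$. Following \cite{KV}, I would pass to a rapidly convergent subsequence of the $\phi_n$ and splice the corresponding kernels into one time-homogeneous $P$ in such a way that successive applications of $P$ effectively operate at finer and finer invariance scales, so that $P^n$ equidistributes across the kernels $\Phi_{n_k}$ and its invariance defect tends to $0$. The uniform tightness (1) for $P$ is then inherited from the uniform displacement control of the $\phi_n$; crucially, because $P$ is allowed to be genuinely space-inhomogeneous (and is \emph{not} a translation-invariant random walk), this construction does not collapse property A to amenability.

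\textbf{Main obstacle.} The delicate point is precisely the one that is already delicate in \cite{KV}: converting the uniform \emph{boundedness} of the invariance defect of the powers of a single homogeneous kernel into decay to $0$. In the random-walk setting this passes through the asymptotic entropy and the equivalence of a trivial Poisson boundary with the Liouville-type condition $\Vert g\mu^{*n}-\mu^{*n}\Vert_1\to0$; here the same mechanism must be executed uniformly in the base point $h$ (the $\sup_{h}$ in condition (2)), which is the one genuinely new feature. I would therefore isolate a ``uniform triviality of the Poisson boundary'' statement for the constructed kernel $P$ and deduce condition (2) from it, mirroring the convolution computation but keeping every estimate uniform over base points.
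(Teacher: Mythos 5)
Your first direction (from the process to property A) is essentially the paper's argument: set $\phi_n(g,\cdot)=P^n(g,\cdot)$, note that condition (2) is already the Higson--Roe invariance condition, and upgrade one-step tightness to $n$-step tightness by a union bound with tolerance $\delta/n$ per step and the product set $F_{\delta/n}\cdots F_{\delta/n}$ (the paper does the same with $R_{\delta,n}=nR_{\delta/n,1}$ in the metric setting). You do lean on the ``modified'' finite-$\delta$-tail form of Theorem \ref{HR}; the paper proves this as a separate proposition by truncating and renormalizing, but that is a routine step and I would not count it against you.

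The converse direction, however, has a genuine gap: you have named the strategy (splice a subsequence of the Reiter-type kernels $\phi_n$ into a single $P$) but not the construction or the estimate that makes it work, and the fallback you propose would not close the gap. Concretely, the paper takes $P=\sum_i t_i\phi_i$, a convex combination with $\sum_i t_i=1$, where the subsequence is chosen so that two quantitative conditions interlock: (a) $\phi_i$ is $\varepsilon_i$-invariant on pairs at distance up to $n_iR_{i-1}$, where $R_{i-1}$ bounds the support radii of all earlier kernels and $n_i$ satisfies $(t_1+\cdots+t_{i-1})^{n_i}<\varepsilon_i$; and (b) the $R_i$ increase. Then $P^{n_i}$ expands into words $\phi_{k_1}*\cdots*\phi_{k_{n_i}}$; the words using only indices below $i$ have total weight $<\varepsilon_i$ by (a), and every other word contains a letter with index $\geq i$, reached at a moment when the two measures started at $x$ and $y$ are still supported in a ball of radius $n_iR_{i-1}$, so that letter contracts their difference to $<2\varepsilon_i$ (part (2) of Proposition \ref{convolution}) and non-expansiveness (part (1)) carries this to the end of the word. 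This interplay between the support radii of the low-index kernels and the invariance scale of the high-index ones is the entire content of the proof, and it is absent from your sketch. Moreover, your proposed route through asymptotic entropy and a ``uniform triviality of the Poisson boundary'' statement points the wrong way: the entropy machinery of \cite{KV} is not what proves this implication even in the amenable case (there, too, it is the elementary word-expansion computation above), and it has no evident uniform-in-basepoint version, whereas the uniformity over $h$ is exactly what condition (2) demands. No boundary theory is needed; the argument is a finite combinatorial estimate once the subsequence is chosen correctly.
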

The second condition above implies the triviality of the Poisson boundary of the Markov chain defined by $P$ and any initial probability of $G$, but it is not equivalent to it. One can think of this condition as a metric uniform version of the condition for triviality of the Poisson boundary (the sufficient condition for triviality of the Poisson boundary of the Markov chain for any initial probability, requires a convergence of the sequence of products that is much more tame). \\ \\
The above theorem is stated for finitely generated groups, but applies in a more general metric setting of bounded geometry. The structure of this note is as following - in section 2 we will give the necessary background on property A (for discrete metric spaces of bounded geometry) and on the Poisson boundary of a Markov process (this background will be far from complete, since a complete background on any of the mentioned topics is far beyond the scope of this note. Regarding property A, the interested reader can consult \cite{YuNo} for a gentle exposition on this subject and \cite{Will} for an extensive survey. Regarding Possion boundary of Markov processes, the interested reader can consult \cite{Kaim} and the references mentioned in it). In section 3 we will prove theorem \ref{Poisson-group} for the general case of metric spaces with bounded geometry. \\ \\

The author wants to thank Uri Bader and Amos Nevo for introducing him to the Poisson boundary and to Piotr Nowak for reading an early draft of this article. 

\section{Background}

\subsection{Markov chains and the Poisson boundary}
\subsubsection{Measures and convolutions}
Let $X$ be a countable set, and let $Prob(X)$ be the space of functions $\mu : X \rightarrow \mathbb{R}_{\geq 0}$ such that $\sum_x \mu (x) =1$. The $L^1$ metric on $Prob(X)$ is defined as usual to be 
$$\Vert \mu - \nu \Vert_1 = \sum_x \vert \mu (x)-\nu (x) \vert,$$
(since we are dealing with a countable space, this metric is equivalent to the total variation metric). \\
A transition probability on the state space $X$ is a map $\phi : X \rightarrow Prob(X)$ so $\phi (x) \in Prob(X)$ and we shall use the notation $ \phi (x)(y) = \phi (x,y)$. \\
Given a measure $\mu \in Prob(X)$ and a transition probability $\phi$ on $X$, the convolution $\mu * \phi \in Prob(X)$ is defined as
$$ (\mu * \phi) (y) = \sum_x \mu (x) \phi (x,y).$$
Given two transition probabilities $\phi, \psi$ on $X$, the convolution $\phi * \psi$ is a transition probability on $X$ defined as
$$ (\phi * \psi ) (x,y) = \sum_z \phi (x,z) \psi (z,y).$$
For a transition probability $P$ we shall use the notation $P^n = P*...*P$ and $P^n (x,.)$ is $\delta_x*P^n$. Below is an easy proposition about the interplay between the convolution and the $L^1$ metric (the proof is given for the sake of completeness).

\begin{proposition}
\label{convolution}
Let $\mu ,\nu \in Prob(X)$ and be $\phi$ a transition probability on $X$, then
\begin{enumerate}
\item Convolution (from the right) with $\phi$ is a non expending map, i.e.,
$$ \Vert \mu *\phi - \nu *\phi \Vert_1 \leq \Vert \mu - \nu \Vert_1.$$
\item If there exists $\varepsilon >0$ and $x_0 \in X$ such that for all $x \in Supp(\mu ) \cup Supp(\nu )$ we have $\Vert \phi (x,.) - \phi (x_0,.) \Vert_1 < \varepsilon$ then 
$$ \Vert \mu *\phi - \nu *\phi \Vert_1 < 2 \varepsilon.$$
\end{enumerate}
\end{proposition}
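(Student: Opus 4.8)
The plan is to prove both statements directly from the definition of convolution and the triangle inequality for the $L^1$ norm, treating each part separately.

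For part (1), I would start by writing out the $L^1$ distance between the two convolutions explicitly. We have
$$ \Vert \mu * \phi - \nu * \phi \Vert_1 = \sum_y \left\vert \sum_x (\mu(x) - \nu(x)) \phi(x,y) \right\vert.$$
The key step is to apply the triangle inequality to pull the inner sum out, giving the bound $\sum_y \sum_x \vert \mu(x) - \nu(x)\vert \phi(x,y)$. Then I would swap the order of summation (justified since all terms are nonnegative), sum over $y$ first using $\sum_y \phi(x,y) = 1$ since $\phi(x,\cdot) \in Prob(X)$, and recover exactly $\sum_x \vert \mu(x) - \nu(x) \vert = \Vert \mu - \nu \Vert_1$. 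This is a routine Fubini-plus-triangle-inequality argument and I do not expect any obstacle here.

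For part (2), the idea is to use part (1) as a black box after reducing to a comparison with a single fixed transition probability. I would introduce the constant transition probability that always outputs $\phi(x_0, \cdot)$, or more directly compare $\mu * \phi$ and $\nu * \phi$ each against the common measure built from $\phi(x_0,\cdot)$. The cleanest route is the triangle inequality
$$ \Vert \mu * \phi - \nu * \phi \Vert_1 \leq \Vert \mu * \phi - \phi(x_0,\cdot)\Vert_1 + \Vert \phi(x_0,\cdot) - \nu * \phi \Vert_1,$$
and then to bound each term separately. Since $\mu$ is a probability measure, $\phi(x_0,\cdot) = \sum_x \mu(x)\,\phi(x_0,\cdot)$, so the first term equals $\Vert \sum_x \mu(x)(\phi(x,\cdot) - \phi(x_0,\cdot))\Vert_1$, which by the triangle inequality is at most $\sum_x \mu(x)\Vert \phi(x,\cdot) - \phi(x_0,\cdot)\Vert_1$. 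Using the hypothesis that each $x \in \mathrm{Supp}(\mu)$ satisfies $\Vert \phi(x,\cdot) - \phi(x_0,\cdot)\Vert_1 < \varepsilon$, and that $\mu$ has total mass $1$, this term is strictly less than $\varepsilon$; the same argument on $\nu$ bounds the second term by $\varepsilon$, yielding the claimed $2\varepsilon$.

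The only point requiring a little care, and the closest thing to an obstacle, is the strictness of the inequality: the hypothesis gives a strict bound $\Vert \phi(x,\cdot) - \phi(x_0,\cdot)\Vert_1 < \varepsilon$ for each $x$ in the support, and I want to conclude a strict bound after summing. Since the support may be infinite, I would want the supremum over the support to be at most $\varepsilon$ (so that the weighted average is at most $\varepsilon$), and then observe that strictness is preserved at the final step because at least one term contributes strictly. In practice this is harmless, and if one is content with $\leq 2\varepsilon$ the subtlety disappears entirely; I would note this but not belabor it.
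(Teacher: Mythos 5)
Your proposal is correct and follows essentially the same route as the paper: part (1) is the identical triangle-inequality-plus-Fubini computation, and part (2) is the same add-and-subtract trick through the intermediate measure $\phi(x_0,\cdot)$ (the paper inserts $\sum_x \mu(x)\phi(x_0,y)$ inside the absolute value rather than phrasing it as a triangle inequality, but this is the same argument). Your remark on preserving strictness over a possibly infinite support is a valid point of care that the paper glosses over, and your resolution of it is correct.
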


\begin{proof}
\begin{enumerate}
\item For every $\mu ,\nu \in Prob(X)$ we have
$$ \Vert \mu *\phi - \nu*\phi \Vert_1 = \sum_y \vert \sum_x \mu (x) \phi (x,y) -  \sum_x \nu(x) \phi (x,y) \vert \leq$$
$$ \leq  \sum_y  \sum_x \vert (\mu (x)-\nu (x))\vert \phi (x,y)  = \sum_x \vert \mu (x)-\nu (x)\vert = \Vert \mu  - \nu \Vert_1.$$
\item Notice that since $\mu ,\nu \in Prob(X)$ we have for any $y$ that
$$ \phi (x_0,y) = \sum_x \mu (x) \phi (x_0,y) = \sum_x \nu (x) \phi (x_0,y).$$
This implies 
$$ \Vert \mu *\phi - \nu *\phi \Vert_1 = \sum_y \vert \sum_x \mu (x) \phi (x,y) -  \sum_x \nu (x) \phi (x,y) \vert = $$
$$ = \sum_y \vert \sum_x \mu (x) \phi (x,y) - \sum_x \mu (x) \phi (x_0,y)  + \sum_x \nu (x) \phi (x_0,y) -   \sum_x \nu (x) \phi (x,y) \vert \leq $$
$$ \leq \sum_y \vert \sum_x \mu (x) (\phi (x,y) - \phi (x_0,y)) \vert + \sum_y \vert \sum_x \nu (x) (\phi (x,y) - \phi (x_0,y) )\vert \leq $$
$$ \leq \sum_{x \in Supp(\mu )} \mu (x) \sum_y  \vert  \phi (x,y) - \phi (x_0,y) \vert +  \sum_{x \in Supp(\nu )} \nu (x) \sum_y  \vert  \phi (x,y) - \phi (x_0,y) \vert \leq $$
$$<  \sum_{x \in Supp(\mu )} \mu (x) \varepsilon +  \sum_{x \in Supp(\nu )} \nu (x) \varepsilon = 2 \varepsilon.$$ 
\end{enumerate}
\end{proof}

\subsubsection{The Poisson boundary of a Markov chain}
A triple $(X,\mu,P)$ where $X$ is a countable state space, $\mu \in Prob(X)$ and $P$ is a transition probability on $X$ is called a (time homogeneous) Markov chain. For a Markov chain $(X,\mu,P)$, $\mu$ is called the initial probability of the chain.  \\
Given a countable state space $X$, define the measure space $X^\mathbb{N}$ (the sequences of elements of $X$) with the $\sigma$-algebra $\mathcal{A}$ generated by the cylinder sets:
$$[x_1,...,x_n] = \lbrace \omega = (y_1,...) \in X^\mathbb{N} : y_1=x_1,...,y_n=x_n \rbrace.$$
A Markov chain $(X,\mu,P)$ defines a probability measure $P_\mu$ on $( X^\mathbb{N}, \mathcal{A} )$ given as 
$$P_\mu ([x_1,...,x_n]) =  \mu (x_1) P(x_1,x_2) ... P(x_{n-1},x_n).$$
Define the operator $T: X^\mathbb{N} \rightarrow X^\mathbb{N} $ as 
$$ T (x_1,x_2,...)=(x_2,x_3,...).$$
Then the Poisson boundary of $(X,\mu,P)$ is defined as the ergodic components of $T$ under the measure $P_{\mu}$. We say that the Poisson boundary is trivial if there is only one ergodic component. In \cite{Der}, Derriennic proved the following (see also \cite{Kaim}):
\begin{theorem}
\label{triviality-condition}
Given $X$ and a transition probability $P$, the Poisson boundary of $(X,\mu,P)$ is trivial for all $\mu \in Prob(X)$ if and only if for every $x,y \in X$ we have
$$ \lim_{n \rightarrow \infty} \frac{1}{n} \Vert \sum_{i=1}^n P^i (x,.) - \sum_{i=1}^n P^i (y,.) \Vert_1 = 0.$$
\end{theorem}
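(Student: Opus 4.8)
The plan is to route everything through bounded $P$-harmonic functions, which mediate between the analytic Cesàro condition and the measure-theoretic triviality of the boundary. Recall that a bounded $h : X \to \mathbb{R}$ is $P$-harmonic if $Ph = h$, where $(Ph)(x) = \sum_y P(x,y) h(y)$. The first step is the standard correspondence: along $P_\mu$-almost every trajectory $\omega = (x_1,x_2,\dots)$ the sequence $h(x_n)$ is a bounded martingale and hence converges, giving a bounded $T$-invariant function; conversely every bounded $T$-invariant function $F$ produces a bounded harmonic function by integrating against $P_{\delta_x}$, i.e. $h(x) = \mathbb{E}_{\delta_x}[F]$. Consequently the Poisson boundary of $(X,\mu,P)$ is trivial for every $\mu$ if and only if every bounded $P$-harmonic function on $X$ is constant (the ``only if'' uses two-point initial measures $\tfrac12\delta_x+\tfrac12\delta_y$ to detect a nonconstant $h$). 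I would take this reduction as the backbone and prove the two implications of Theorem \ref{triviality-condition} against the criterion ``all bounded harmonic functions are constant''.

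The ``if'' direction is then immediate. Assume the Cesàro condition and let $h$ be bounded harmonic. Since $P^i h = h$, the averages $A_n := \tfrac{1}{n}\sum_{i=1}^n P^i$ also fix $h$, so for all $x,y$,
\[ |h(x) - h(y)| = \Big| \big\langle A_n \delta_x - A_n \delta_y,\, h \big\rangle \Big| \le \Vert h \Vert_\infty \cdot \frac{1}{n} \Big\Vert \sum_{i=1}^n P^i(x,\cdot) - \sum_{i=1}^n P^i(y,\cdot) \Big\Vert_1 \xrightarrow[n\to\infty]{} 0, \]
using the $\ell^1$--$\ell^\infty$ duality. Hence $h$ is constant and, by the reduction, the boundary is trivial for every $\mu$.

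For the ``only if'' direction I would work on the Banach space $\ell^1_0(X)$ of summable signed measures of total mass zero. By Proposition \ref{convolution}(1) right convolution by $P$ is a contraction of $\ell^1(X)$, and since it preserves total mass it restricts to a contraction $P : \ell^1_0(X) \to \ell^1_0(X)$; the operators $A_n$ are then uniformly bounded. A telescoping estimate gives $\Vert A_n \xi - A_n(P\xi)\Vert_1 = \tfrac{1}{n}\Vert P\xi - P^{n+1}\xi\Vert_1 \le \tfrac{2}{n}\Vert\xi\Vert_1 \to 0$, so every $\ell^1$-norm limit point of $(A_n\xi)_n$ is fixed by $P$. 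Pairing against a bounded harmonic $h$ (a fixed point of $P$ acting on $\ell^\infty$) gives $\langle A_n\xi, h\rangle = \langle \xi, h\rangle$, whence $\Vert A_n\xi\Vert_1 \ge |\langle\xi,h\rangle|/\Vert h\Vert_\infty$ for every $n$; for $\xi = \delta_x-\delta_y$ the right-hand side vanishes exactly when all bounded harmonic functions are constant. The theorem therefore reduces to the matching upper bound: that triviality forces $\Vert A_n(\delta_x-\delta_y)\Vert_1 \to 0$.

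This reverse inequality is the main obstacle, and it is precisely a norm (rather than weak) mean-ergodic statement for the contraction $P$ on the \emph{non-reflexive} space $\ell^1_0(X)$. In a reflexive space it would follow at once from the mean ergodic theorem; in $\ell^1$ it can genuinely fail without a tightness hypothesis, since by the Dunford--Pettis criterion norm convergence of the Cesàro averages requires the orbit $\{P^i\xi\}_i$ to be uniformly integrable, i.e. uniformly concentrated on finite subsets of $X$, which is not automatic. Surmounting this is exactly the role of Derriennic's $0$--$2$ law, which asserts $\lim_n \Vert P^n(x,\cdot) - P^n(y,\cdot)\Vert_1 \in \{0,2\}$, together with a companion analysis of the Cesàro-averaged tail. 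I would invoke (or reprove) this dichotomy to upgrade the limit-point information to genuine $\ell^1$-norm convergence of $A_n(\delta_x-\delta_y)$ to $0$, which is the substance of the hard direction and where essentially all the difficulty lies.
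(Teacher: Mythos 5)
First, note that the paper does not prove this statement at all: it is quoted from Derriennic \cite{Der} (see also \cite{Kaim}), so there is no internal argument to compare against. Your reduction to the constancy of bounded $P$-harmonic functions and your proof of the ``if'' direction (the Ces\`aro condition forces $|h(x)-h(y)| \le \Vert h\Vert_\infty \cdot \frac{1}{n}\Vert \sum_{i=1}^n P^i(x,\cdot)-\sum_{i=1}^n P^i(y,\cdot)\Vert_1 \to 0$ for every bounded harmonic $h$) are correct and standard.

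The ``only if'' direction, however, is not proved; it is only named. You correctly diagnose the difficulty as a norm mean-ergodic statement on the non-reflexive space $\ell^1_0(X)$: constancy of bounded harmonic functions only kills the pairings $\langle A_n(\delta_x-\delta_y), h\rangle$ and does not by itself force $\Vert A_n(\delta_x-\delta_y)\Vert_1 \to 0$. But the resolution you point to --- ``invoke (or reprove) Derriennic's $0$--$2$ law'' --- is precisely the content of the theorem being asserted, so as written the hard direction is deferred rather than established. Two concrete problems beyond the deferral itself: (i) the dichotomy you quote, $\lim_n\Vert P^n(x,\cdot)-P^n(y,\cdot)\Vert_1 \in \{0,2\}$, is the zero--two law governing the tail behaviour; the statement relevant to the Poisson (invariant) boundary is the Ces\`aro dichotomy, namely that $\lim_n \frac{1}{n}\Vert\sum_{i=1}^n\left(P^i(x,\cdot)-P^i(y,\cdot)\right)\Vert_1$ takes only the values $0$ or $2$, which is a different assertion needing its own proof. (ii) Even granting the correct dichotomy, you must still exclude the value $2$ under the hypothesis that the boundary is trivial for all $\mu$; this requires extracting from the value-$2$ alternative a nonconstant bounded harmonic function, or equivalently a nontrivial $T$-invariant event for the initial measure $\frac{1}{2}\delta_x+\frac{1}{2}\delta_y$, and that construction --- the heart of the Derriennic/Kaimanovich argument --- is absent. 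Until that step is supplied, the hard direction remains a genuine gap.
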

 
When $X=G$ is a countable group, the Poisson boundary is defined with respect to a transition probability $P$ which is invariant under the group action, i.e., 
$$ \forall h,g \in G, P(g,gh)=P(e,h).$$
Rosenblatt \cite{Rosen} and Kaimanovich-Vershik \cite{KV} proved the following theorem characterizing amenable groups in terms of the Poisson boundary:
\begin{theorem}
A countable group $G$ is amenable if and only if there is an invariant transition probability $P$ on $G$ such that the Poisson boundary of $(G,\mu,P)$ is trivial for all $\mu \in Prob(G)$. 
\end{theorem}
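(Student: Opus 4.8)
The final statement as printed reads ``discrete'' where it must read ``amenable'': every countable group is discrete, so under the literal reading the right-hand side would force \emph{every} countable group to admit an invariant transition probability with everywhere-trivial Poisson boundary, which is false. I therefore read the assertion as the Rosenblatt--Kaimanovich--Vershik characterization of amenability and sketch a proof of that. Throughout I identify an invariant transition probability $P$ with the single measure $\mu \in Prob(G)$ defined by $\mu(h) = P(e,h)$; invariance then gives $P(g,gh) = \mu(h)$, and an easy induction (using the definition of convolution) shows $P^n(e,\cdot) = \mu^{*n}$ and $P^n(g,\cdot) = g\mu^{*n}$, where $(g\nu)(x) = \nu(g^{-1}x)$. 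Thus every quantity attached to $P$ is a left-translate of the corresponding quantity attached to the powers $\mu^{*n}$.

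For the direction ``trivial boundary $\Rightarrow$ amenable'' I would apply Derriennic's criterion (Theorem \ref{triviality-condition}): triviality of the Poisson boundary of $(G,\mu,P)$ for all initial distributions is equivalent to $\frac{1}{n}\Vert \sum_{i=1}^n P^i(e,\cdot) - \sum_{i=1}^n P^i(g,\cdot)\Vert_1 \to 0$ for every $g \in G$. Writing $\nu_n = \frac{1}{n}\sum_{i=1}^n \mu^{*i}$ and using $P^i(g,\cdot) = g\mu^{*i}$, the quantity inside the limit is exactly $\Vert \nu_n - g\nu_n \Vert_1$. Hence the criterion says that the sequence of probability measures $\nu_n$ is asymptotically left-invariant, $\Vert g\nu_n - \nu_n \Vert_1 \to 0$ for all $g$, which is Reiter's condition; so $G$ is amenable.

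For the converse ``amenable $\Rightarrow$ such $P$ exists'' I would start from a Reiter sequence $\mu_n$ with $\Vert g\mu_n - \mu_n \Vert_1 \to 0$ and manufacture a single probability measure $\mu$ whose random walk has trivial boundary. One cannot simply take $\mu$ to be one of the $\mu_n$; following \cite{KV} (and \cite{Rosen}) I would assemble $\mu = \sum_k a_k \mu_{n_k}$ as a convex combination with rapidly decaying weights $a_k$ and indices $n_k$ chosen inductively so that the Ces\`aro averages $\nu_n$ of the convolution powers $\mu^{*i}$ still satisfy $\Vert g\nu_n - \nu_n \Vert_1 \to 0$. Setting $P(g,gh) = \mu(h)$ then produces an invariant transition probability, and the verification of triviality is again just Derriennic's criterion read backwards.

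The main obstacle is precisely this construction in the converse: passing from a sequence of asymptotically invariant measures to a \emph{single} measure whose iterated convolutions remain asymptotically invariant in the Ces\`aro sense. This is the technical heart of the Rosenblatt and Kaimanovich--Vershik arguments, and it is where Proposition \ref{convolution}(1) enters, since the non-expansiveness of right-convolution is what lets one control $\Vert g\mu^{*i} - \mu^{*i}\Vert_1$ across the powers while choosing the weights and indices. The forward direction, by contrast, is a routine translation of Derriennic's criterion through the invariance of $P$.
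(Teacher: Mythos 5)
You are right that ``discrete'' is a misprint for ``amenable'', and you should know that the paper offers no proof of this statement at all: it is quoted as background and attributed to \cite{Rosen} and \cite{KV}, so there is no internal proof to compare against. The closest thing in the paper is the proof of its main result, Theorem \ref{Poisson-metric}, which carries out the non-equivariant analogue of exactly the construction you defer. Your forward direction is complete and correct: invariance gives $P^i(g,\cdot)=g\mu^{*i}$ with $\mu(h)=P(e,h)$, Derriennic's criterion (Theorem \ref{triviality-condition}) applied to $x=e$, $y=g$ says precisely that the Ces\`aro averages $\nu_n=\frac{1}{n}\sum_{i=1}^{n}\mu^{*i}$ satisfy $\Vert g\nu_n-\nu_n\Vert_1\rightarrow 0$ for every $g$, and an asymptotically invariant sequence of probability measures yields an invariant mean (nondegeneracy is not needed for this implication), hence amenability.

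For the converse you only gesture at the construction, which is defensible for a cited result, but your sketch contains one concrete misattribution: the decisive control does not come from Proposition \ref{convolution}(1). Non-expansiveness of right convolution can never create decay; in the paper's argument it serves only to propagate an estimate on a single factor through the remaining tail of the product (and to pass from $n_i$ to all $n>n_i$). The engine is the part (2) estimate: if a late factor $\phi$ is nearly constant, within $\varepsilon$, across the supports of the two partial convolutions $\mu$ and $\nu$ accumulated so far, then $\Vert\mu*\phi-\nu*\phi\Vert_1<2\varepsilon$. In the group setting this means choosing indices $n_k$ inductively so that $\mu_{n_k}$ is almost invariant under every element of a finite set large enough to contain (up to small mass --- the Reiter measures need not be finitely supported, so one truncates as in Proposition \ref{HR-refined}) the supports of all products of earlier terms, and choosing weights $a_k$ and times $m_k$ with $(a_1+\cdots+a_{k-1})^{m_k}<\varepsilon_k$, so that at time $m_k$ the tuples consisting only of early indices carry negligible total mass while every remaining tuple contains a late, almost-invariant factor to which the part (2) estimate applies. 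That is literally the $A_i/B_i$ decomposition in the paper's proof of Theorem \ref{Poisson-metric}, and transcribing that argument into the equivariant setting would turn your outline into a proof; as written, the converse is a sketch, and the one step you do pin to a specific lemma is pinned to the wrong one.
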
 

\subsection{Bounded geometry and Property A}

In this section will give the basic definitions regarding metric spaces with bounded geometry and Property A. Throughout this entire paper we assume that our metric space $(X,d)$ is discrete and countable.

\begin{definition}
A discrete metric space $(X,d)$ is said to have bounded geometry if for every $C>0$ there is a number $M(C)$ such that for every $x \in X$ we have
$$ \vert B(x,C) \vert \leq M(C).$$ 
\end{definition}

The following example is was of the main motivations to study discrete metric spaces with bounded geometry.

\begin{example}
Let $G$ be a finitely generated group with a generating set $S$, then it is obvious the vertex set of the Cayley graph of $G$ with respect to $S$, with the graph metric on it, is a discrete metric space with bounded geometry. 
\end{example}

\begin{definition}
A discrete metric space $(X,d)$ is said to have Property A if there is a collection $\lbrace A_x^n \rbrace_{x \in X, n \in \mathbb{N}} $ of finite subsets of $X \times \mathbb{N}$ such that the following holds:
\begin{enumerate}
\item For every $n \in \mathbb{N}$ there is a number $R_n$ such that for every $x \in X$ we have
$$ A_x^n \subset B(x,R_n) \times \mathbb{N}.$$
\item For every $K>0$ we have that
$$ \lim_{n \rightarrow \infty} \sup_{d(x,y) < K}  \dfrac{\vert A_x^n \bigtriangleup A_y^n \vert}{\vert A_x^n \cap A_y^n \vert} = 0.$$
\end{enumerate}
\end{definition}

In \cite{HR} Higson and Roe gave a characterization of property A that is analogues to Reiter's condition for amenability. Namely they proved the following (when this theorem is applied to the example of finitely generated groups, one gets theorem \ref{HR}):

\begin{theorem}
\label{HR-metric}
Let $(X,d)$ be a countable discrete metric space with bounded geometry, then $(X,d)$ has property A if and only if there is a sequence of  transition probabilities $\phi_n: X \rightarrow Prob(X)$ such that the following holds:
\begin{enumerate}
\item For every $n$ there is a number $R_n$ such that for every $x \in X$ we have 
$$Supp (\phi_n (x,.) ) \subset B(x,R_n).$$
\item For every $K>0$ we have
$$\lim_{n \rightarrow \infty} \sup_{d(x,y) <K} \Vert \phi_n (x,.) - \phi_n (y,.) \Vert_1 =0.$$
\end{enumerate}
\end{theorem}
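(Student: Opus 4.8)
The plan is to prove the two implications by exhibiting a pair of mutually inverse constructions between the finite sets $A_x^n \subset X \times \mathbb{N}$ and the transition probabilities $\phi_n$, with bounded geometry entering decisively in the harder direction.

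For the direction ``property A $\Rightarrow$ the $\phi_n$'', I would take $\phi_n(x,.)$ to be the normalized horizontal marginal of the indicator of $A_x^n$, namely
$$\phi_n(x,y) = \frac{|\{k \in \mathbb{N} : (y,k) \in A_x^n\}|}{|A_x^n|}.$$
The inclusion $A_x^n \subset B(x,R_n) \times \mathbb{N}$ then gives condition 1 with the same $R_n$. For condition 2, the one inequality I need is the elementary estimate $\|\mathbf{1}_A/|A| - \mathbf{1}_B/|B|\|_1 \leq 2|A \triangle B|/|A \cap B|$, proved by splitting the $\ell^1$-sum over $A\setminus B$, $B\setminus A$ and $A\cap B$ and bounding each piece (after assuming w.l.o.g. $|A|\leq|B|$). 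Since $\phi_n(x,.)$ is the pushforward of $\mathbf{1}_{A_x^n}/|A_x^n|$ under the projection $X \times \mathbb{N} \to X$, and pushforward is non-expanding in $L^1$ (the same triangle-inequality estimate as part 1 of Proposition \ref{convolution}), I would obtain
$$\|\phi_n(x,.) - \phi_n(y,.)\|_1 \leq \frac{2|A_x^n \triangle A_y^n|}{|A_x^n \cap A_y^n|};$$
taking the supremum over $d(x,y)<K$ and letting $n\to\infty$, condition 2 then drops out of the second property A condition.

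For the reverse direction I would discretize, using a sub-graph construction with a scaling sequence $N_n \in \mathbb{N}$ to be fixed at the end: set
$$A_x^n = \{(y,k) \in X \times \mathbb{N} : 1 \leq k \leq \lfloor N_n \phi_n(x,y) \rfloor\}.$$
The support condition $Supp(\phi_n(x,.))\subset B(x,R_n)$ gives $A_x^n \subset B(x,R_n)\times\mathbb{N}$ for free. Since the columns over each point are nested intervals, one has $|A_x^n \triangle A_y^n| = \sum_z |\lfloor N_n\phi_n(x,z)\rfloor - \lfloor N_n\phi_n(y,z)\rfloor|$, and I would combine the bound $|\lfloor Na\rfloor - \lfloor Nb\rfloor|\leq N|a-b|+1$ with the fact that at most $2M(R_n)$ points $z$ lie in the union of the two supports to get $|A_x^n\triangle A_y^n| \leq N_n\|\phi_n(x,.)-\phi_n(y,.)\|_1 + 2M(R_n)$ and, similarly, $|A_x^n \cap A_y^n| \geq N_n(1 - \|\phi_n(x,.)-\phi_n(y,.)\|_1) - 3M(R_n)$.

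The hard part, and really the only delicate point, is keeping the denominator $|A_x^n \cap A_y^n|$ comparable to $N_n$ so that the ratio does not blow up; this is precisely where bounded geometry is indispensable, since it is the uniform bound $M(R_n)$ on ball sizes that controls the accumulated floor-rounding error uniformly in $x$. I would resolve it by choosing $N_n$ to grow fast enough that $M(R_n)/N_n \to 0$ (for instance $N_n \geq n\,M(R_n)$). Writing $\varepsilon_n(K):=\sup_{d(x,y)<K}\|\phi_n(x,.)-\phi_n(y,.)\|_1$, the displayed estimates give, for fixed $K$ and $n$ large, a ratio at most
$$\frac{\varepsilon_n(K) + 2M(R_n)/N_n}{1 - \varepsilon_n(K) - 3M(R_n)/N_n},$$
and since $\varepsilon_n(K)\to 0$ by condition 2 while $M(R_n)/N_n\to 0$ independently of $K$, this tends to $0$, yielding the second property A condition and completing the equivalence.
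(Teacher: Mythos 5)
The paper does not actually prove Theorem \ref{HR-metric}; it quotes it from \cite{HR} and only proves the refinement in Proposition \ref{HR-refined}. Your argument is therefore a self-contained substitute for the cited result, and it is correct: it is essentially the standard Higson--Roe argument (normalized counting measures of the $A_x^n$ in one direction, discretization of $\phi_n$ by floor functions in the other), and every estimate you invoke checks out. The inequality $\Vert \mathbf{1}_A/|A| - \mathbf{1}_B/|B| \Vert_1 \leq 2|A \triangle B|/|A \cap B|$ holds exactly as you describe by splitting over $A\setminus B$, $B\setminus A$, $A\cap B$; the identity $|A_x^n \triangle A_y^n| = \sum_z |\lfloor N_n\phi_n(x,z)\rfloor - \lfloor N_n\phi_n(y,z)\rfloor|$ is valid because the columns are nested initial segments; and your choice $N_n \geq n\,M(R_n)$ does exactly the job of killing the accumulated rounding error, which is indeed the only place bounded geometry is needed. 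Two minor points you should state explicitly if you write this up. First, for a fixed small $n$ and a large $K$ there may be pairs $d(x,y)<K$ with $A_x^n \cap A_y^n = \emptyset$ (disjoint supports), so the supremum in condition 2 of property A can be infinite for finitely many $n$; this is harmless since only the tail of the sequence matters, but it deserves a sentence. Second, you should note that $A_x^n \neq \emptyset$, which follows because some $y$ in the support has $\phi_n(x,y) \geq 1/M(R_n)$, hence $\lfloor N_n\phi_n(x,y)\rfloor \geq n \geq 1$ with your choice of $N_n$. For the intersection lower bound, the sharper route is $\sum_z \min(a_z,b_z) = N_n\bigl(1 - \tfrac{1}{2}\Vert\phi_n(x,.)-\phi_n(y,.)\Vert_1\bigr)$ minus at most $2M(R_n)$ for the floors; your stated bound is weaker but still suffices.
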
 

In the next proposition, we show that the first condition in the above theorem can be weakened:

\begin{proposition}
\label{HR-refined}
Let $(X,d)$ be a countable discrete metric space with bounded geometry, then $(X,d)$ has property A if and only if there is a sequence of  transition probabilities $\phi_n: X \rightarrow Prob(X)$ such that the following holds:
\begin{enumerate}
\item For every $n$ and for every $\delta >0$ there is there is a number $R_{\delta, n}$ such that for every $x \in X$ we have 
$$\sum_{y \in B(x, R_{\delta, n})} \phi_n (x,y) > 1 - \delta.$$
\item For every $K>0$ we have
$$\lim_{n \rightarrow \infty} \sup_{d(x,y) <K} \Vert \phi_n (x,.) - \phi_n (y,.) \Vert_1 =0.$$
\end{enumerate}
\end{proposition}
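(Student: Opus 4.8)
The plan is to prove the two implications separately, with the forward direction being essentially immediate and the reverse direction requiring a truncation argument.

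For the forward direction, suppose $(X,d)$ has property A. By Theorem \ref{HR-metric} there is a sequence of transition probabilities $\phi_n$ satisfying the \emph{stronger} support condition $Supp(\phi_n(x,.)) \subset B(x,R_n)$ together with condition 2. But the stronger condition trivially implies the weakened one: given any $\delta > 0$ one simply sets $R_{\delta,n} := R_n$, so that $\sum_{y \in B(x,R_{\delta,n})} \phi_n(x,y) = 1 > 1 - \delta$ for every $x$. Since condition 2 is verbatim the same in both statements, the same sequence $\phi_n$ witnesses the weakened characterization.

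For the reverse direction, suppose we are given $\phi_n$ satisfying the weakened condition 1 and condition 2. The idea is to manufacture from $\phi_n$ a new sequence $\psi_n$ with genuinely finite support to which Theorem \ref{HR-metric} applies, and the natural candidate is the restriction of $\phi_n(x,.)$ to a large ball, renormalized to a probability measure. Concretely I would fix a sequence $\delta_n \to 0$ (say $\delta_n = 1/n$), put $R_n := R_{\delta_n,n}$, write $c_x := \sum_{z \in B(x,R_n)} \phi_n(x,z) > 1 - \delta_n$, and define
$$\psi_n(x,y) = \begin{cases} c_x^{-1}\,\phi_n(x,y), & y \in B(x,R_n), \\ 0, & \text{otherwise.} \end{cases}$$
Since $c_x > 1 - \delta_n > 0$ the renormalization is legitimate, $\psi_n(x,.) \in Prob(X)$, and $Supp(\psi_n(x,.)) \subset B(x,R_n)$, so condition 1 of Theorem \ref{HR-metric} holds with radius $R_n$.

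The crux is to transfer condition 2 from $\phi_n$ to $\psi_n$, for which I would first bound the truncation error \emph{uniformly in $x$}. Splitting the sum over $y \in B(x,R_n)$ and its complement gives $\Vert \phi_n(x,.) - \psi_n(x,.) \Vert_1 = (1 - c_x) + c_x(c_x^{-1} - 1) = 2(1 - c_x) < 2\delta_n$, a bound independent of $x$. The triangle inequality then yields $\Vert \psi_n(x,.) - \psi_n(y,.) \Vert_1 \leq \Vert \phi_n(x,.) - \phi_n(y,.) \Vert_1 + 4\delta_n$ for all $x,y$; taking the supremum over $d(x,y) < K$ and letting $n \to \infty$, the first term vanishes by condition 2 for $\phi_n$ while $4\delta_n \to 0$, so $\psi_n$ satisfies condition 2. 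Theorem \ref{HR-metric} then delivers property A. The only mildly delicate point is that the truncation error bound must be uniform in the base point $x$, since condition 2 is a supremum over all pairs with $d(x,y) < K$ and an $x$-dependent bound would not survive it; this uniformity is precisely what the weakened condition 1 supplies, as $R_{\delta,n}$ is chosen uniformly over $x \in X$.
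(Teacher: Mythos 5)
Your proposal is correct and follows essentially the same route as the paper: truncate $\phi_n(x,.)$ to the ball $B(x,R_{1/n,n})$, renormalize, bound the truncation error uniformly in $x$ (you get $2/n$ where the paper gets $3/n$, an immaterial difference), and transfer condition 2 by the triangle inequality before invoking Theorem \ref{HR-metric}. Your explicit remark that the uniformity of $R_{\delta,n}$ over $x$ is what makes the supremum in condition 2 survive is exactly the point the paper's argument relies on implicitly.
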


\begin{proof}
The first direction is obvious from theorem \ref{HR-metric} - for every $n$ and every $\delta>0$ choose $R_{\delta, n} = R_n$. Conversely, assume there is a sequence of  transition probabilities $\phi_n: X \rightarrow Prob(X)$ with the conditions stated above and define a new sequence $\phi_n'$ as follows: 
$$ \phi_n' (x,y) = \begin{cases}
\dfrac{1}{\sum_{y \in B(x, R_{\frac{1}{n}, n})} \phi_n (x,y)} \phi_n (x,y) & y \in B(x, R_{\frac{1}{n}, n}) \\
0 & d(x,y) \geq R_{\frac{1}{n}, n}
\end{cases} .$$
Then $\phi_n'$ is a sequence of transition probability such that for every $x \in X$ we have
$$Supp (\phi_n' (x,.)) \subseteq B(x, R_{\frac{1}{n}, n}),$$
and also for every $n >1$ and every $x \in X$ we have that
$$ \Vert \phi_n (x,.) - \phi_n' (x,.) \Vert_1 = \sum_{y \in B(x, R_{\frac{1}{n}, n})} \phi_n (x,y) ( \dfrac{1}{\sum_{y \in B(x, R_{\frac{1}{n}, n})} \phi_n (x,y)} -1 ) +$$
$$+ \sum_{y, d(x,y) \geq R_{\frac{1}{n}, n}} \phi_n (x,y)  \leq \dfrac{1}{1 - \frac{1}{n}}- 1 + \dfrac{1}{n} \leq \dfrac{3}{n}.$$
Thus, for every $x,y \in X$ and every $n>1$ we have that 
$$ \Vert \phi_n' (x,.) - \phi_n' (y,.) \Vert_1 \leq \Vert \phi_n' (x,.) - \phi_n (x,.) \Vert_1 +$$
$$+ \Vert \phi_n (x,.) - \phi_n (y,.) \Vert_1 + \Vert \phi_n (y,.) - \phi_n' (y,.) \Vert_1 \leq  \Vert \phi_n (x,.) - \phi_n (y,.) \Vert_1 + \dfrac{6}{n},$$
and therefore
$$\lim_{n \rightarrow \infty} \sup_{d(x,y) <K} \Vert \phi_n' (x,.) - \phi_n' (y,.) \Vert_1 \leq \lim_{n \rightarrow \infty} \left( \sup_{d(x,y) <K} \Vert \phi_n (x,.) - \phi_n (y,.) \Vert_1 + \dfrac{6}{n} \right) = 0,$$
and the conditions of theorem \ref{HR-metric} hold for $\phi_n'$.
\end{proof}

\section{Property A as uniform triviality of the Poisson boundary}

\begin{theorem}
\label{Poisson-metric}
Let $(X,d)$ be a countable, discrete metric space with a bounded geometry. Then $(X,d)$ has property A if and only if there exists a transition probability $P$ on the state space $X$ with the following properties:
\begin{enumerate}
\item For every $\delta >0$ there is some $R_\delta$  such that for every $x \in X$ we have 
$$ \sum_{y \in B(x,R_\delta)} P  (x,y) > 1 - \delta.$$
\item For every $K>0$ we have that
$$\lim_{n \rightarrow \infty} \sup_{d(x,y)<K}  \Vert P^n (x,.) - P^n (y,.) \Vert_1 = 0.$$
\end{enumerate}  
\end{theorem}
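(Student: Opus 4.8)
The plan is to prove both implications; one is a direct verification against Proposition \ref{HR-refined}, and the other requires manufacturing a single transition probability out of a Higman--Roe sequence.

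For the forward implication (existence of $P$ $\Rightarrow$ property A) I would set $\phi_n := P^n$ and check that $(\phi_n)$ satisfies the hypotheses of Proposition \ref{HR-refined}. The smoothness condition (2) of that proposition is literally condition (2) of the theorem, so nothing is needed there. The only point requiring work is the weak support condition (1): I must show $P^n(x,\cdot)$ is uniformly tight at a radius growing with $n$. Writing $q_n(x) = \sum_{y \notin B(x,nR)} P^n(x,y)$ with $R = R_{\delta'}$ from condition (1) of the theorem, a one-step conditioning --- any $x\to z\to y$ path leaving $B(x,nR)$ either already has $z \notin B(x,(n-1)R)$ or takes a final step of length $>R$ --- yields $q_n(x) \le q_{n-1}(x) + \delta'$, hence $q_n(x) \le n\delta'$ uniformly in $x$. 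Taking $\delta' = \delta/n$ and $R_{\delta,n} = nR_{\delta/n}$ gives condition (1) of Proposition \ref{HR-refined}, and property A follows.

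For the reverse implication I start from a Higman--Roe sequence $\phi_n$ as in Theorem \ref{HR-metric} and define $P = \sum_{k=1}^\infty c_k \phi_{n_k}$ with $c_k = 2^{-k}$ and a rapidly increasing subsequence $(n_k)$ chosen inductively: having fixed $n_1<\dots<n_{k-1}$ (hence $R_{n_{k-1}}$), I pick $n_k$ so large that $\phi_{n_k}$ averages extremely well up to a scale governed by the previous radius, say $\sup_{d(u,v)<S_k}\Vert \phi_{n_k}(u,\cdot)-\phi_{n_k}(v,\cdot)\Vert_1 < 1/k$ with $S_k = 2^{k+1}k R_{n_{k-1}}$, which is possible by condition (2) of Theorem \ref{HR-metric}. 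Condition (1) of the theorem is then immediate: given $\delta$, choose $m$ with $\sum_{k\ge m}c_k = 2^{-(m-1)}<\delta$, so the mass of $P(x,\cdot)$ inside $B(x,R_{n_{m-1}})$ is at least $1-2^{-(m-1)}>1-\delta$, and set $R_\delta = R_{n_{m-1}}$. The heart of the matter is condition (2). Fixing $K$, I set $a_N := \sup_{d(x,y)<K}\Vert P^N(x,\cdot)-P^N(y,\cdot)\Vert_1$ and note that, since $P^N(x,\cdot)-P^N(y,\cdot)=(\delta_x-\delta_y)*P^N$ and convolution with a stochastic kernel is $L^1$-non-expanding (Proposition \ref{convolution}(1), valid for signed measures), $a_N$ is non-increasing; so it suffices to exhibit, for each $\varepsilon$, one $N$ with $a_N<\varepsilon$. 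To do this I write $\sigma = \delta_x-\delta_y$, split $P = A_m + B_m$ into the low-index part $A_m=\sum_{k<m}c_k\phi_{n_k}$ (constant row sums $a_m=1-2^{-(m-1)}$) and the high-index part $B_m=\sum_{k\ge m}c_k\phi_{n_k}$, and expand by the position of the first $B_m$-factor: $\sigma * P^N = \sigma * A_m^N + \sum_{j=0}^{N-1}\sigma * A_m^j * B_m * P^{N-1-j}$.

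Using non-expansiveness on the trailing $P^{N-1-j}$, the bound $\Vert\sigma*A_m^j\Vert_1\le 2a_m^j$, the facts that $\sigma*A_m^j$ has total mass $0$ and support in $B(x,K+jR_{n_{m-1}})$, and Proposition \ref{convolution}(2) applied to the normalised positive and negative parts, each summand is controlled by $2a_m^j$ times the scale-$(K+jR_{n_{m-1}})$ averaging modulus $\varepsilon_k(\cdot)$ of the high-index kernels. Splitting the $j$-sum at a cutoff $J$ (the geometric factor $a_m^j$ annihilates the tail $j>J$) produces a bound of the shape $a_N \le 2a_m^N + 4\sup_{k\ge m}\varepsilon_k(\rho_J) + 2a_m^{J+1}$ with $\rho_J = K + JR_{n_{m-1}}$. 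The main obstacle is precisely the tension here: taking more steps spreads the support of $\sigma*A_m^j$ out to scale $\sim jR_{n_{m-1}}$, which is the scale at which the good kernels must still average well, so a fixed mixture alone yields only a bound that is small but does not tend to $0$. This is exactly what the diagonal choice of $(n_k)$ repairs: setting $J = \lceil 2^m m\rceil$ makes $a_m^{J}\approx e^{-2m}$ small while keeping $\rho_J \le S_m$ for $m$ large relative to the fixed $K$, so the inductive guarantee forces $\sup_{k\ge m}\varepsilon_k(\rho_J)\le 1/m$; finally letting $N\to\infty$ kills $2a_m^N$. Thus for any $\varepsilon$ one takes $m$ large, then $N$ large, to get $a_N<\varepsilon$, and monotonicity upgrades this to $\lim_N a_N = 0$, i.e. condition (2). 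I expect the bookkeeping coupling the cutoff $J$, the mixing index $m$, and the inductive scales $S_k$ to be the only genuinely delicate part; everything else is non-expansiveness together with the two parts of Proposition \ref{convolution}.
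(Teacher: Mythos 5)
Your proposal is correct and follows essentially the same route as the paper: one direction is $\phi_n = P^n$ together with the one-step union bound for tightness (feeding into Proposition \ref{HR-refined}), and the other is a convex combination $P=\sum_k c_k\phi_{n_k}$ of a diagonally chosen subsequence of Higman--Roe kernels, with the key estimate obtained by isolating the first well-averaging factor in the expansion of $P^N$ and applying both parts of Proposition \ref{convolution}. The only differences are bookkeeping: the paper expands $P^{n_i}$ as a full multinomial over index tuples and verifies the estimate only at special times $n_i$ chosen so that $(t_1+\cdots+t_{i-1})^{n_i}<\varepsilon_i$ and so that the $n_i$-step support spread stays within the guaranteed averaging scale $n_iR_{i-1}$, whereas your first-passage decomposition $P^N=A_m^N+\sum_j A_m^j B_m P^{N-1-j}$ works at all large $N$ at the cost of introducing the cutoff $J$.
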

\begin{proof}
Let $(X,d)$ be a countable, discrete metric space with a bounded geometry and assume that $(X,d)$ has property A, therefore by theorem \ref{HR-metric} there is a sequence of maps $\phi_n: X \rightarrow Prob(X)$ such that:
\begin{enumerate}
\item  For every $n$ there is $R_n$ such that for all $x \in X$ we have that $Supp (\phi_n (x,.) ) \subset B(x,R_n)$.
\item For every $K>0$ we have that
$$\lim_{n \rightarrow \infty} \sup_{d(x,y)<K} \Vert \phi_n (x,.) - \phi_n (y,.) \Vert_1  = 0.$$
\end{enumerate}
Choose two sequences of positive real numbers $\lbrace t_i \rbrace, \lbrace \varepsilon_i \rbrace$ such that $\sum_i t_i =1$ and $\lim_{i \rightarrow \infty} \varepsilon_i = 0$. Let $\lbrace n_i \rbrace$ be an increasing sequence of natural numbers such that
$$(t_1+...+t_{i-1})^{n_i} < \varepsilon_i.$$
Define a subsequence of $\phi_i$ which we will also denote $\phi_i$ inductively: \\
let $\phi_1$ be the map such that for every $x,y \in X$ with $d(x,y)<1$ we have 
$$\Vert \phi_1 (x,.) - \phi_1 (y,.) \Vert_1 < \varepsilon_1.$$
and let $R_1$ be a number such that $Supp (\phi_1 (x,.)) \subset B(x,R_1)$ for every $x \in X$. \\
Define $\phi_i$ to be the transition probability such that for every $x,y \in X$ with $d(x,y) < n_i R_{i-1}$ we have
$$\Vert \phi_i (x,.) - \phi_i (y,.) \Vert_1 < \varepsilon_i,$$
and let $R_i$ be a number such that $Supp (\phi_i (x,.)) \subset B(x,R_i)$ for every $x \in X$ and without loss of generality we choose $R_i > \max \lbrace R_{i-1},i \rbrace$. \\
We shall show that $P = \sum_i t_i \phi_i$ is a transition probability with the properties stated in the theorem. First note that $P$ is well defined, since for every $x,y \in X$, $P(x,y) =  \sum_i t_i \phi_i (x,y)$ is a series with non negative terms that is bounded from above by $\sum_i t_i = 1$ (since $\forall i, \phi_i (x,y) \leq 1$) and therefore the series $\sum_i t_i \phi_i (x,y)$ is convergent. Also, for every $x \in X$ we have $\sum_{y} P(x,y) = 1$, since we can change the order of summation due to the fact that the series is absolutely convergent. \\
Next, note that for every $\delta >0$ there is some $i_0$ such that $\sum_{i=i_0}^\infty t_i < \delta$. Choose $R_\delta = R_{i_0}$. For every $x \in X$ we have that $Supp (\phi_i (x,.)) \subset B(x,R_i)$, and since we choose $R_i$ to be monotone increasing, we have
$$\forall i < i_0, \forall x,y \in X, d(x,y) \geq R_{i_0} \Rightarrow \phi_i (x, y) = 0.$$
Therefore   
$$ \sum_{y, d(x,y) \geq R_{i_0}} P  (x,y) = \sum_{y, d(x,y) \geq R_{i_0}} \sum_{i=i_0}^\infty t_i \phi_i (x, y) \leq \sum_{i=i_0}^\infty t_i < \delta.$$
Thus, we got that 
$$ \sum_{y \in B(x,R_{i_0})} P  (x,y) > 1 - \delta.$$
To prove the second condition we shall show that for every $x,y \in X$ with $d(x,y) < R_{i-1}$ we have  
$$\Vert P^{n_i} (x,.) -  P^{n_i} (y,.) \Vert_1 \leq 4 \varepsilon_i,$$
(it is sufficient to work with the subsequence $n_i$ because from \ref{convolution} we get that for every $n>n_i$ we have
$$\Vert P^{n} (x,.) -  P^{n} (y,.) \Vert_1 \leq \Vert P^{n_i} (x,.) -  P^{n_i} (y,.) \Vert_1$$
). \\
By definition we have that
$$P^{n_i} (.,.)  = \sum_{(k_1,...,k_{n_i}) \in \mathbb{N}^{n_i}} t_{k_1} ... t_{k_{n_i}} \phi_{k_1} * ... * \phi_{k_{n_i}}.$$
Define
$$A_i = \lbrace (k_1,...,k_{n_i}) \in \mathbb{N}^{n_i} : k_j < i, \forall j \rbrace$$
$$B_i = \mathbb{N}^{n_i} \setminus A_i.$$
This yields 
$$ P^{n_i} (.,.)  = \sum_{(k_1,...,k_{n_i}) \in A_i} t_{k_1} ... t_{k_{n_i}} \phi_{k_1} * ... * \phi_{k_{n_i}} + \sum_{(k_1,...,k_{n_i}) \in B_i} t_{k_1} ... t_{k_{n_i}} \phi_{k_1} * ... * \phi_{k_{n_i}}.$$
Note that for every $x \in X$
$$ \Vert \sum_{(k_1,...,k_{n_i}) \in A_i} t_{k_1} ... t_{k_{n_i}} \phi_{k_1} * ... * \phi_{k_{n_i}} (x,.) \Vert_1 \leq$$
$$\leq \sum_{(k_1,...,k_{n_i}) \in A_i} t_{k_1} ... t_{k_{n_i}}  = (t_1+..+t_{i-1})^{n_i} < \varepsilon_i.$$
It follows that for every $x,y \in X$ we have
$$\Vert P(x,.) - P(y,.) \Vert_1 < 2 \varepsilon_i + $$
$$ + \Vert \sum_{(k_1,...,k_{n_i}) \in B_i} t_{k_1} ... t_{k_{n_i}} \left( ( \phi_{k_1} * ... * \phi_{k_{n_i}} ) (x,.) - ( \phi_{k_1} * ... * \phi_{k_{n_i}} ) (y,.) \right) \Vert_1 \leq$$
$$ \leq 2 \varepsilon_i +  \sum_{(k_1,...,k_{n_i}) \in B_i} t_{k_1} ... t_{k_{n_i}} \Vert ( \phi_{k_1} * ... * \phi_{k_{n_i}} ) (x,.) - ( \phi_{k_1} * ... * \phi_{k_{n_i}} ) (y,.)  \Vert_1.$$
Thus, in order to show 
$$\Vert P(x,.) - P(y,.) \Vert_1 < 4 \varepsilon_i,$$
it is sufficient to show that for every $(k_1,...,k_{n_i}) \in B_i$ we have 
$$\Vert ( \phi_{k_1} * ... * \phi_{k_{n_i}} ) (x,.) - ( \phi_{k_1} * ... * \phi_{k_{n_i}} ) (y,.)  \Vert_1 \leq 2 \varepsilon_i.$$
Let $(k_1,...,k_{n_i}) \in B_i$ and let $j$ be the largest index such that $k_j < i$ (so $1 \leq j < n_i$). Denote 
$$\mu  = \phi_{k_1} * ... * \phi_{k_j} (x,.) \in Prob(X),$$
$$\nu = \phi_{k_1} * ... * \phi_{k_j} (y,.) \in Prob(X).$$
Then we need to show 
$$ \Vert \mu * \phi_{k_{j+1}}*...*\phi_{k_{n_i}}  - \nu * \phi_{k_{j+1}}*...*\phi_{k_{n_i}} \Vert_1 < 2 \varepsilon_i,$$
and by proposition \ref{convolution} it is enough to show
$$ \Vert \mu * \phi_{k_{j+1}} - \nu * \phi_{k_{j+1}} \Vert_1 < 2 \varepsilon_i.$$
Since $k_1 ,.., k_j < i$ we have that $Supp(\mu ) \subset B(x, j R_{i-1})$ and $Supp(\nu ) \subset B(y, j R_{i-1})$. Also, since $d(x,y) < R_{i-1}$ we have that $Supp(\nu ) \subset B(x, (j+1) R_{i-1}) \subset B(x, n_i R_{i-1})$ and obviously $Supp(\mu ) \subset B(x, n_i R_{i-1})$. Since $k_{j+1} \geq i$ we have from the choice of $R_{k_{j+1}} \geq R_i \geq n_i R_{i-1}$ that for all $x' \in B(x, n_i R_{i-1})$ we have
$$ \Vert \phi_{k_{j+1}} (x,.) - \phi_{k_{j+1}} (x',.) \Vert_1 < \varepsilon_i.$$
Applying proposition \ref{convolution} we get that 
$$ \Vert \mu * \phi_{k_{j+1}} - \nu * \phi_{k_{j+1}} \Vert_1 < 2 \varepsilon_i,$$
and the proof of the first direction is complete. \\ \\

The other direction follows from proposition \ref{HR-refined} - if there is a transition probability $P$ with the above properties, define $\phi_n: X \rightarrow Prob (X)$ as $\phi_n (x,.) = P^n (x,.)$.
We need only to show that for every $n$ and every $\delta >0$ there is some $R_{\delta,n}$ such that for every $x \in X$ we have
$$ \sum_{y \in B(x,R_{\delta,n})} P^n  (x,y) > 1 - \delta.$$
If $n=1$ by the conditions of the theorem for every $\delta>0$ we have $R_{\delta,1}$ that meets the above requirement. Fix $\delta > 0$, $n>1$ and choose $R_{\delta,n} = n R_{\frac{\delta}{n},1}$. 
Indeed observed that for every $x \in X$ we have 
$$ \sum_{y, d(x,y) \geq R_{\delta,n}} P^n  (x,y) \leq $$
$$ \leq \sum_{i=1}^{n} \sum_{z_1 \in X} P^{i-1} (x,z_1) \sum_{z_2 \in X, d(z_1,z_2) \geq R_{\frac{\delta}{n},1}} P(z_1,z_2) \sum_{y \in X} P^{n-i} (z_2,y) \leq  \delta.$$

\end{proof}

When the theorem above is applied to the case of finitely generated groups one get theorem \ref{Poisson-group} stated in the introduction. 

\begin{remark}
The sufficient condition for the triviality of the Poisson boundary of $X,P$ of theorem \ref{triviality-condition} can be read in the following way: the Poisson boundary of $X,P$ is trivial for any initial probability if and only if the sequence of transition probabilities defined as 
$$\phi_n (x,.) = \frac{1}{n} \left( P(x,.) + ... + P^n (x,.) \right),$$
satisfies the condition
$$\forall x,y \in X, \lim_{n \rightarrow \infty} \Vert \phi_n (x,.) - \phi_n (y,.) \Vert_1 = 0.$$
The above condition does not require $X$ to be a metric space and therefore does not take into account any interplay between the metric and the measure. Comparing the above condition to the conditions given in theorem \ref{Poisson-metric}, property A is equivalent to stronger conditions:
\begin{enumerate}
\item The needed limit is of convolutions and not an average of convolutions (see \cite{Rosen} to the difference between "ergodic by convolutions" and "mixing by convolutions").
\item The limit has uniformity with respect to the metric. 
\end{enumerate} 
Therefore one can think of the conditions of theorem \ref{Poisson-metric} as "uniform" triviality of the Poisson boundary. One should note, that without the uniformity in the condition, it becomes trivial as stated in the next proposition.
\end{remark}

\begin{proposition}
For every metric space $(X,d)$ there is a transition probability $P$ with 
\begin{enumerate}
\item For every $\delta >0$ there is some $R_\delta$  such that for every $x \in X$ we have 
$$ \sum_{y \in B(x,R_\delta)} P  (x,y) > 1 - \delta.$$
\item For every $x,y \in X$ we have that 
$$\lim_{n \rightarrow \infty} \Vert P^n (x,.) - P^n (y,.) \Vert_1 = 0.$$
\end{enumerate}  
\end{proposition}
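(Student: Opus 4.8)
The plan is to construct a single, highly degenerate transition probability by making the walk drift, with very lazy steps, toward a fixed absorbing basepoint. Since $X$ is countable, enumerate it as $X = \{x_0, x_1, x_2, \dots\}$ (the list may be finite) and take $x_0$ as the basepoint. For $i \ge 1$ set $\varepsilon_i = \frac{1}{1 + d(x_{i-1},x_i)} \in (0,1)$ and define
$$ P(x_i,x_{i-1}) = \varepsilon_i, \qquad P(x_i,x_i) = 1-\varepsilon_i \quad (i \ge 1), \qquad P(x_0,x_0)=1, $$
with all remaining entries equal to $0$. Thus from $x_i$ the chain either stays put or decreases its index by one, and $x_0$ is absorbing. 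The point of letting $\varepsilon_i$ shrink as $d(x_{i-1},x_i)$ grows is exactly to reconcile the two (a priori conflicting) requirements, as explained below.

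For the second condition I would argue by absorption. Starting from $x_i$, the index is non-increasing and drops by exactly one at a time; the waiting time to pass from index $k$ to $k-1$ is geometric with parameter $\varepsilon_k>0$, hence almost surely finite, so the chain reaches $x_0$ in finite time almost surely and then remains there. Consequently $P^n(x_i,x_0) \to 1$, which gives $\Vert P^n(x_i,\cdot) - \delta_{x_0} \Vert_1 = 2\big(1-P^n(x_i,x_0)\big) \to 0$. A triangle inequality then yields, for every fixed pair $x,y$,
$$ \Vert P^n(x,\cdot) - P^n(y,\cdot)\Vert_1 \le \Vert P^n(x,\cdot) - \delta_{x_0}\Vert_1 + \Vert \delta_{x_0} - P^n(y,\cdot)\Vert_1 \longrightarrow 0. $$
This step uses crucially that no uniformity over $x,y$ is demanded: the number of steps needed for the two chains to be absorbed depends on the starting indices.

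The main obstacle is the first (locality) condition, and it is where the choice of $\varepsilon_i$ does the work. Given $\delta>0$, I would take $R_\delta = 1/\delta$. The support of $P(x_i,\cdot)$ is $\{x_i,x_{i-1}\}$, so the only mass that can escape $B(x_i,R_\delta)$ is $\varepsilon_i$, and only when $d(x_{i-1},x_i) > R_\delta$. But in that case $\varepsilon_i = \frac{1}{1+d(x_{i-1},x_i)} < \frac{1}{1+1/\delta} < \delta$, so the escaping mass is already below $\delta$; and when $d(x_{i-1},x_i) \le R_\delta$ no mass escapes at all. Either way $\sum_{y\in B(x_i,R_\delta)} P(x_i,y) > 1-\delta$ for every $i$, as required. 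The thing to keep in mind is that a fixed jump probability would fail here whenever $X$ has unbounded gaps $d(x_{i-1},x_i)$, since then no single $R_\delta$ could work for a threshold below that constant; tying $\varepsilon_i$ to the distance is precisely what restores the uniform-in-$x$ control while still leaving a positive probability of movement, which is all that is needed for almost sure absorption and hence for the second condition.
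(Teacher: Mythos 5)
Your construction is correct, and it rests on the same key insight as the paper's: make the chain drift toward a single absorbing basepoint, with the probability of moving tied inversely to the distance of the move so that a single radius $R_\delta = 1/\delta$ controls the escaping mass uniformly in $x$, while the convergence $P^n(x,\cdot)\to\delta_{x_0}$ need not be uniform. The realization differs, though. The paper has every point $x\neq x_0$ jump \emph{directly} to $x_0$ with probability $\tfrac{1}{1+d(x,x_0)}$ and otherwise stay put; this needs no enumeration of $X$, and $P^n(x,\cdot)$ has the closed form $\bigl(1-(\tfrac{d(x,x_0)}{1+d(x,x_0)})^n\bigr)\delta_{x_0}+(\tfrac{d(x,x_0)}{1+d(x,x_0)})^n\delta_x$, so the second condition is verified by a one-line computation with an explicit rate. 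You instead route the walk down an arbitrary enumeration one index at a time and invoke almost-sure absorption (a finite sum of a.s.\ finite geometric waiting times), which is a sound but purely qualitative argument and introduces an inessential dependence on the chosen enumeration. Both proofs are valid; the paper's is the more economical special case of your scheme in which each point is adjacent (in the enumeration sense) to $x_0$ itself. One minor point worth making explicit in your write-up is the boundary case $d(x_{i-1},x_i)=R_\delta$ if $B$ denotes the open ball, but your bound $\varepsilon_i=\tfrac{1}{1+R_\delta}<\delta$ already covers it.
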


\begin{proof}
Fix some $x_0 \in X$ and define 
$$P(x,.) = \begin{cases}
\delta_{x_0} & x=x_0 \\
\frac{1}{1+d(x,x_0)} \delta_{x_0} + \frac{d(x,x_0)}{1+d(x,x_0)} \delta_x & x \neq x_0
\end{cases}.$$
Then for every $\delta >0$ take $R_\delta = \frac{1}{\delta} $ and check that
$$ \sum_{y \in B(x,R_\delta)} P  (x,y) > 1 - \delta.$$
Also, for every $x \neq x_0$ we have
$$P^n (x,.) =(1-(\frac{d(x,x_0)}{1+d(x,x_0)})^n) \delta_{x_0} +(\frac{d(x,x_0)}{1+d(x,x_0)})^n \delta_x,$$
and therefore we have for every $x,y \in X$ that 
$$ \Vert P^n (x,.) - P^n (y,.) \Vert_1 \leq 2 (\frac{d(x,x_0)}{1+d(x,x_0)})^n +2 (\frac{d(y,x_0)}{1+d(y,x_0)})^n,$$
and the proposition is proved.
\end{proof}

\bibliographystyle{plain}
\bibliography{bibl}

\end{document}